\newtheorem{lem}{Lemma}
\newtheorem{prop}{Proposition}
\newtheorem{theorem}{Theorem}
\newenvironment{proof}
{\begin{sloppypar}\noindent{\sf Proof~: }}
{\hspace*{\fill}\eop
\medskip
\end{sloppypar}}
\newcommand{\CC}{\mathbb{C}}
\newcommand{\NN}{\mathbb{N}}
\newcommand{\RR}{\mathbb{R}}
\newcommand{\LL}{\mathop{\longrightarrow}}
\newcommand{\integ}[4]{\displaystyle{\int_{#1}^{#2}\!{#3}\,d{#4}}}
\def\eop{\hbox{{\vrule height7pt width3pt depth0pt}}}
\title{A Product Integration type Method for solving Nonlinear Integral Equations in $L^1$ }
\author[1,2]{L. Grammont}
\author[1,3]{H. Kaboul}
\author[1,4]{M. Ahues}
\address[1]{Institut Camille Jordan,  UMR 5208, CNRS-Universit\'e de Lyon\\23 rue du Dr Paul Michelon, 42023 Saint-\'Etienne Cedex 2, France.}
\address[2]{laurence.grammont@univ-st-etienne.fr}
\address[3]{hanane.kaboul@univ-st-etienne.fr}
\address[4]{mario.ahues@univ-st-etienne.fr}
\date{}
\begin{document}
\maketitle

\addcontentsline{toc}{section}{Notation}
\section*{Abstract}
This paper deals with nonlinear Fredholm integral equations of the second kind. We study the case of a weakly singular kernel and we set the problem in the space $L^1([a,b],\CC)$. As numerical method, we extend the product integration scheme from $C^0([a,b],\CC)$ to $L^1([a,b],\CC)$.

\bigskip

\noindent
\textbf{\textbf{Keywords:}} Fredholm integral equation, product integration method, nonlinear equation.

\section{Introduction}
In this paper, we consider  the fixed point problem 
\begin{equation}\label{eq1}
\mbox{Find } \varphi:\quad U(\varphi)=\varphi,
\end{equation}
where $U$ is of the form:
\begin{equation}\label{opr1}
U(x):=K(x)-y \mbox{ for all } x\in \Omega.
\end{equation}
The domain $\Omega$ of $U$ is in $L^1([a,b],\CC)$ and $y\in L^1([a,b],\CC)$.   

The operator $K$ is of the following form:
$$
K(x)(s):=\integ{a}{b}{ H(s,t)L(s,t)N(x(t))}{t}  \mbox{ for all } x\in \Omega,
$$
and $N:\RR\longrightarrow \CC$ is twice Fréchet-differentiable and may be nonlinear.

This kind of equations are usually treated in the space of continuous functions $C^0([a,b],\CC)$. In \cite{atkinson2}, Atkinson gives a survey about the main numerical methods which can be applied to such integral equations of the second kind  (projection method, iterated projection method, Galerkin’s method,  Collocation method, Nystr\"om method, discrete Galerkin method...) (see also\cite{kra1}). The approximate solution $\varphi_n$ of (\ref{eq1}) is the solution of an approximate equation of the form~: 
\begin{eqnarray}\label{eq1-approx}
\mbox{Find }\varphi_n\in L^1([a,b],\CC):\quad U_n(\varphi_n)=\varphi_n,\end{eqnarray}
where 
$$
U_n(x)=K_n(x)-y_n,
$$
$K_n$ being  an approximation of the operator $K$ and  $y_n$ an approximation of $y$. For the classical projection method, $K_n=\pi_n K \pi_n$, where  $\pi_n$ is a projection onto a finite dimensional space, and $y_n=\pi_n y$. For the Kantorovich projection method, $K_n=\pi_n K $ and $y_n= y$. For the Iterated projection method, $K_n= K \pi_n$ and $y_n=y$. For the Nystr\"om method, $K_n$  is provided by a numerical quadrature of the integral operator $K$. In \cite{atkinson2}, the Banach space in which the solution $\varphi $ is found is the space of continuous function or eventually  $L^2$ and the kernel is smooth. When solving numerically weakly singular equations, one is required to evaluate large number of weakly singular integrals. In this case, when the integral operator is still compact,  the technique of product integration methods appears to have been a popular choice to approximate such integrals (see \cite{anselone1}, \cite{atkinson}, \cite{atkinson1}, \cite{atkinson2}). This method requires the unknown to be smooth. The product integration method consists in performing a linear interpolation of the smooth part of the kernel times the unknown.  The product integration method is used to treat linear or nonlinear Volterra equation or Fredholm equation of the second kind  and for each type of equation, different kernels are studied: logarithmic singular  kernels in \cite{diogo}, kernels with a fixed Cauchy singularity coming from scattering theory in \cite{bertram}, Abel's kernel coming from the theory of fluidity and heat transfer between solid and gases in \cite{cameron}. Most of these papers need  evaluations of the unknown at the nodes and also continuity of the exact solution.     

We consider the numerical treatment of equation (\ref{eq1}) when 
$\varphi$ can not be evaluated at each point. In this situation we tackle the problem of  applying a product integration type method. We propose a kind  of hybrid method between a product integration method and a iterated projection method for which the general theory of Anselone (see \cite{anselone}) and Ansorge (see \cite{ansorge}) can be applied through collectively compact convergence theory. 

In Section 2, we recall the framework of the paper and the results needed to prove our main result.   In Section 3,  we present our main result. We prove the existence, the uniqueness and the convergence of our method. Section 4 is devoted to the numerical implementation of the method and an illustration of our theoretical results.    

\section{General framework}
To prove the existence and the uniqueness of the approximate solution, we use a general result of Atkinson (see \cite{atkinson1}, Theorem 4 p 804) recalled in this paper (see Theorem \ref{atkinson}). To apply this theorem, we need to check if the assumptions are satisfied in our case. As the framework of our problem is the space $L^1([a,b],\CC)$, to prove the compactness of the operators and the collectivelly compactness of the sequence of approximate operators, we will use the Kolmogorov-Riesz-Fréchet theorem recalled in this paper too (see Theorem \ref{KRF}).\\
Here, we assume that $U_n(x)$ is of the form $K_n(x)-y$ ($y_n=y$). 

\bigskip

\textbf{Hypotheses:}

\begin{itemize}
\item[(H1)] $\varphi$ denotes a fixed point of $U$. $X$ is a complex Banach space, $\Omega_r(\varphi)$ is the open ball centered at $\varphi$ and with radius $r>0$ of the space $L^1([a,b],\CC)$, $U$ and $U_n$, for $n\geq 1$, are completely continuous possibly nonlinear operators from $\Omega_r(\varphi)$ into $X$.
\item[(H2)] $(U_n)_{n\geq 1}$ is a collectively compact sequence.
\item[(H3)] $(U_n)_{n\geq 1}$ is pointwise convergent to $U$ on $\Omega_r(\varphi)$.
\item[(H4)] There exists  $r_\varphi>0$, such that $U$ and $U_n$, for $n\geq 1$, are twice Fréchet differentiable on $\Omega_{r_\varphi}(\varphi)\subset \Omega_r(\varphi)$, and there exists a least upper bound $M(\varphi,r)$ such that 
$$
\max_{x  \in \Omega_r(\varphi)  }\{\Vert U''(x)\Vert,\Vert U_n''(x)\Vert\}\leq M(\varphi,r).
$$
\end{itemize}
\begin{theorem}\label{atkinson}
Assume that {\rm(H1)} to {\rm(H4)} are satisfied and that $1$ is not an eigenvalue of $U'(\varphi)$. Then $\varphi$ is an isolated fixed point of $U$. Moreover, there is $\epsilon$ in $]0,r_\varphi[$ and $n_{\epsilon}>0$ such that, for all  $n\geq n_\epsilon$, $U_n$ has a unique fixed point $\varphi_n$ in $\Omega_\epsilon(\varphi)$. Also, there is a constant $\gamma>0$ such that 
\begin{equation}\label{estatkin}
\Vert \varphi-\varphi_n\Vert \leq \gamma\Vert U(\varphi)-U_n(\varphi)\Vert \qquad \mbox{ for } n\geq n_\epsilon.
\end{equation}
\end{theorem}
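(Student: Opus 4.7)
The plan is to reduce the nonlinear problem to a linear fixed-point problem governed by $U'(\varphi)$, exploit the Fredholm alternative on the compact operator $U'(\varphi)$, and then propagate this to $U_n'(\varphi)$ via the collectively compact convergence theory of Anselone. The quadratic Taylor remainder, controlled uniformly by $M(\varphi,r)$, will provide the slack needed to run a Banach contraction argument and to derive the final a posteriori bound.

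First, I would establish that $I-U'(\varphi)$ is invertible with bounded inverse. Since $U$ is completely continuous and twice Fréchet-differentiable, $U'(\varphi)$ is a compact linear operator on $X$; as $1$ is not an eigenvalue, the Fredholm alternative yields a bounded $(I-U'(\varphi))^{-1}$. Isolation of $\varphi$ then follows by a standard contradiction argument: if $x_k\to\varphi$ were distinct fixed points, writing $h_k=(x_k-\varphi)/\|x_k-\varphi\|$ and using the Taylor expansion
\[
U(x_k)-\varphi = U'(\varphi)(x_k-\varphi)+R(x_k),\qquad \|R(x_k)\|\leq \tfrac{M(\varphi,r)}{2}\|x_k-\varphi\|^2,
\]
we get $(I-U'(\varphi))h_k\to 0$, forcing $h_k\to 0$, which contradicts $\|h_k\|=1$.

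Next, I would transfer the invertibility to $I-U_n'(\varphi)$ for $n$ large. This is the main obstacle: pointwise convergence $U_n\to U$ does not trivially give norm convergence of the linearizations. The standard fix is to apply the collectively compact convergence lemma (Anselone) to the derivatives: collective compactness of $\{U_n\}$ together with the uniform second-derivative bound lets one show that $\{U_n'(\varphi)\}$ is collectively compact and that $U_n'(\varphi)\to U'(\varphi)$ pointwise, hence $\|(U_n'(\varphi)-U'(\varphi))U'(\varphi)\|\to 0$. By the Anselone identity
\[
I-U_n'(\varphi)=(I-U'(\varphi))\bigl[I-(I-U'(\varphi))^{-1}(U_n'(\varphi)-U'(\varphi))\bigr],
\]
combined with a Neumann-series argument applied on the compact set $U'(\varphi)(\overline{B})$, we obtain that $(I-U_n'(\varphi))^{-1}$ exists and is uniformly bounded by some $C$ for $n\geq n_0$.

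With this in hand, define, for $n\geq n_0$, the auxiliary map
\[
\Phi_n(x):=x-(I-U_n'(\varphi))^{-1}\bigl(x-U_n(x)\bigr).
\]
Its fixed points coincide with those of $U_n$. Using Taylor's theorem at $\varphi$ with remainder bounded by $\tfrac{M(\varphi,r)}{2}\|x-\varphi\|^2$, I would show that on $\Omega_\epsilon(\varphi)$ with $\epsilon<\min(r_\varphi,1/(CM(\varphi,r)))$ and $n$ large enough so that $C\|U_n(\varphi)-U(\varphi)\|<\epsilon/2$, the map $\Phi_n$ sends $\overline{\Omega_\epsilon(\varphi)}$ into itself and is a contraction of ratio $\leq 1/2$. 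Banach's theorem then yields a unique fixed point $\varphi_n\in\Omega_\epsilon(\varphi)$.

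Finally, for the estimate I would write
\[
\varphi-\varphi_n=U(\varphi)-U_n(\varphi_n)=\bigl[U(\varphi)-U_n(\varphi)\bigr]+U_n'(\varphi)(\varphi-\varphi_n)+R_n,
\]
with $\|R_n\|\leq \tfrac{M(\varphi,r)}{2}\|\varphi-\varphi_n\|^2$. Applying $(I-U_n'(\varphi))^{-1}$, taking norms and absorbing the quadratic term into the left-hand side (possible since $\epsilon$ was chosen small), one obtains $\|\varphi-\varphi_n\|\leq \gamma\|U(\varphi)-U_n(\varphi)\|$ with $\gamma:=2C$, which is the claimed inequality (\ref{estatkin}).
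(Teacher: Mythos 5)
The paper does not prove this theorem itself---its ``proof'' is the citation \emph{See Theorem 4 in Atkinson (1973)}---and your argument is a correct reconstruction of essentially that proof: Fredholm alternative for $I-U'(\varphi)$ (compactness of $U'(\varphi)$ coming from complete continuity of $U$), transfer of invertibility to $I-U_n'(\varphi)$ via collectively compact convergence of the derivatives, a contraction-mapping argument for the Newton-type auxiliary map $\Phi_n$, and absorption of the quadratic Taylor remainder to get $\gamma=2C$. The one step you assert rather than prove---that (H2)--(H4) yield collective compactness and pointwise convergence of $\{U_n'(\varphi)\}$, obtained by comparing $U_n'(\varphi)z$ with the difference quotients $(U_n(\varphi+tz)-U_n(\varphi))/t$ uniformly in $n$---is precisely the content of Atkinson's preliminary lemmas (and note that the operative quantity for the Neumann-series step is $\Vert (U_n'(\varphi)-U'(\varphi))U_n'(\varphi)\Vert\to 0$ rather than the product with $U'(\varphi)$ alone), so nothing essential is missing.
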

\begin{proof}
See Theorem 4 in \cite{atkinson1}.
\end{proof}

To prove that the assumptions (H1) and (H2) are satisfied in our case, we use the Kolmogorov-Riesz-Fréchet theorem, recalled here below.

\begin{theorem}(Kolmogorov-Riesz-Fréchet)\label{KRF}
Let $F$ be a bounded set in $L^p(\RR^q,\CC)$, $1\leq p\leq+ \infty$. If
$$
\lim_{\Vert h\Vert \to0}\Vert\tau_hf-f\Vert_p=0
$$
uniformly in $f\in F$, where 
$$
\tau_h f(\cdot):=f(\cdot+h),
$$
then the closure of $F|_\Omega$ is compact in $L^p(\Omega,\CC)$ for any measurable set $\Omega\in \RR^p$ with finite measure.
\end{theorem}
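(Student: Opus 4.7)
The plan is to establish total boundedness of $F|_\Omega$ in $L^p(\Omega,\CC)$, which is equivalent to relative compactness since $L^p(\Omega,\CC)$ is complete. I would carry this out in two stages: first approximate every $f\in F$ by a smooth convolution uniformly in $f$, and then invoke Ascoli--Arzel\`a on the regularized family to obtain compactness in $C$, transferred back to $L^p$ by the finite measure of $\Omega$.

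Fix a smooth nonnegative mollifier $\rho$ compactly supported in the unit ball of $\RR^q$ with $\int_{\RR^q}\rho=1$, and set $\rho_\epsilon(x):=\epsilon^{-q}\rho(x/\epsilon)$. Writing
$$
(f*\rho_\epsilon)(x)-f(x)=\int_{\RR^q}\rho_\epsilon(y)\bigl(f(x-y)-f(x)\bigr)\,dy,
$$
Minkowski's integral inequality yields $\Vert f*\rho_\epsilon-f\Vert_p\leq \sup_{\Vert y\Vert\leq\epsilon}\Vert \tau_{-y}f-f\Vert_p$. By the translation-equicontinuity hypothesis, this bound is uniform in $f\in F$ and tends to $0$ as $\epsilon\to 0$. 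Thus $F$ is uniformly approximated in $L^p(\RR^q,\CC)$ by the convolved family $F_\epsilon:=\{f*\rho_\epsilon:f\in F\}$.

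Next, let $\Omega'$ be a bounded open set of finite measure with $\overline{\Omega}\subset\Omega'$. For each fixed $\epsilon>0$, I would check that $\{(f*\rho_\epsilon)|_{\overline{\Omega'}}:f\in F\}$ is equibounded and equicontinuous in $C(\overline{\Omega'})$. Equiboundedness follows from H\"older's inequality, $\Vert f*\rho_\epsilon\Vert_\infty\leq \Vert\rho_\epsilon\Vert_{p'}\sup_{f\in F}\Vert f\Vert_p$, with $p'$ the conjugate exponent. Equicontinuity comes from the uniform continuity of $\rho_\epsilon$ via the estimate $|(f*\rho_\epsilon)(x)-(f*\rho_\epsilon)(x')|\leq \Vert\rho_\epsilon(\cdot-x)-\rho_\epsilon(\cdot-x')\Vert_{p'}\sup_{f\in F}\Vert f\Vert_p$. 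Ascoli--Arzel\`a then gives relative compactness of $F_\epsilon|_{\overline{\Omega'}}$ in $C(\overline{\Omega'})$; since $\Omega$ has finite measure, uniform convergence on $\overline{\Omega'}$ implies $L^p(\Omega,\CC)$-convergence, so $F_\epsilon|_\Omega$ is totally bounded in $L^p(\Omega,\CC)$.

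To conclude, given $\delta>0$ I would pick $\epsilon$ so small that $\Vert f-f*\rho_\epsilon\Vert_{L^p(\Omega)}<\delta/2$ for every $f\in F$, cover $F_\epsilon|_\Omega$ by finitely many $L^p(\Omega,\CC)$-balls of radius $\delta/2$, and observe that the balls with the same centers and radius $\delta$ cover $F|_\Omega$. Total boundedness, and hence relative compactness, follows. The main obstacle is preserving uniformity in $f\in F$ throughout: both the convolution approximation and the H\"older/Minkowski bounds must exploit the fact that the translation-equicontinuity assumption is itself uniform in $f$, which is precisely what turns a pointwise regularization argument into a compactness statement.
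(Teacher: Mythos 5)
The paper does not actually prove this theorem: it is recalled as a classical result (it is Theorem 4.26/Corollary 4.27 in the Brezis reference listed in the bibliography), so there is no in-paper argument to compare against. Your mollification-plus-Ascoli--Arzel\`a scheme is precisely the standard textbook proof, and the individual estimates you give (Minkowski's integral inequality for $\Vert f*\rho_\epsilon-f\Vert_p$, H\"older for equiboundedness and equicontinuity of $F_\epsilon$, and the final $\delta/2$ covering argument) are all correct and correctly exploit the uniformity in $f$.

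There is, however, one genuine gap relative to the statement as given: $\Omega$ is only assumed measurable with \emph{finite measure}, not bounded, so a bounded open $\Omega'$ with $\overline{\Omega}\subset\Omega'$ need not exist, and your Ascoli--Arzel\`a step has no compact set to run on. The standard repair is a truncation: having fixed $\epsilon$ so that $\Vert f-f*\rho_\epsilon\Vert_p<\delta$ for all $f\in F$, use the uniform bound $\Vert f*\rho_\epsilon\Vert_\infty\leq \Vert\rho_\epsilon\Vert_{p'}\sup_{f\in F}\Vert f\Vert_p$ together with $|\Omega|<\infty$ to choose $R$ with $|\Omega\setminus B_R|$ so small that $\Vert f*\rho_\epsilon\Vert_{L^p(\Omega\setminus B_R)}\leq \Vert f*\rho_\epsilon\Vert_\infty\,|\Omega\setminus B_R|^{1/p}<\delta$ uniformly in $f$; then run your Ascoli--Arzel\`a argument on the compact set $\overline{B_R}$ and assemble the $L^p(\Omega)$-net from the two pieces. (This repair also shows why the endpoint $p=+\infty$, which the paper's statement formally includes, is delicate for unbounded $\Omega$; for the application in the paper only $p=1$ and $\Omega=[a,b]$ are used, where your argument works as written.)
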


In our error estimation analysis, we need to define the following quantities~:

The oscillation of a function $x$ in  $L^1([a,b],\CC)$, relatively to a parameter $h$, is defined by 
\begin{equation}\label{defw1}
w_1(x,h):=\sup_{|u| \in [0,|h|]}\int_a^b| \widetilde{x}(v+u)-\widetilde{x}(v)|dv,
\end{equation}
where 
$$ 
\tilde{x}(t) := \left \{
\begin{array}{ll}
x(t)&\textrm{for $t\in [a,b]$},\\
0 &\textrm{for $t\notin [a,b]$}.
\end{array} \right.
$$
 
The modulus of continuity of a continuous function  on $[a,b]\times [a,b]$,  relatively to a parameter $h$, is defined by
\begin{equation}\label{defw2}
w_2(f,h):=\sup_{u,v \in [a,b]^2 , \Vert u-v\Vert \leq |h|} |f(u)-f(v)|.\end{equation}
 
\begin{lem}
For all $x$ in $L^1([a,b],\CC)$,
$$
\lim_{h\to 0}w_1(x,h)=0.
$$
For all $f$ in  $C^0([a,b]^2,\CC)$, 
$$
\lim_{h\to 0}w_2(f,h)=0.
$$
\end{lem}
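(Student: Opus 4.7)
The two assertions are classical: the first is the continuity of translation in $L^1$, and the second is uniform continuity on a compact set. My plan splits accordingly.

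For the second statement, I would simply invoke the Heine–Cantor theorem: since $[a,b]^2$ is compact and $f$ is continuous on it, $f$ is uniformly continuous. Thus, given $\epsilon>0$, there exists $\delta>0$ such that $\|u-v\|\leq \delta$ implies $|f(u)-f(v)|<\epsilon$, which means $w_2(f,h)\leq \epsilon$ as soon as $|h|\leq \delta$. This is a one-line argument.

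The first statement is the real content. My plan is the standard density argument. First I would observe that $\tilde x\in L^1(\RR,\CC)$ with $\|\tilde x\|_{L^1(\RR)}=\|x\|_{L^1(a,b)}$, and that translation is an isometry of $L^1(\RR)$. Then I would fix $\epsilon>0$ and use the density of $C_c(\RR,\CC)$ in $L^1(\RR,\CC)$ to find a continuous, compactly supported function $g$ with $\|\tilde x-g\|_{L^1(\RR)}<\epsilon/3$. For this $g$, uniform continuity together with compact support (say supported in $[-R,R]$) makes $\int_\RR |g(v+u)-g(v)|\,dv$ small for $|u|$ small: for $|u|\leq 1$ the integrand is supported in $[-R-1,R+1]$ and bounded by the uniform modulus of continuity of $g$, so a standard estimate gives $\|\tau_u g - g\|_{L^1(\RR)}<\epsilon/3$ for $|u|$ below some $\delta$. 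Combining via the triangle inequality,
\[
\|\tau_u \tilde x - \tilde x\|_{L^1(\RR)}\leq \|\tau_u\tilde x-\tau_u g\|_{L^1(\RR)}+\|\tau_u g - g\|_{L^1(\RR)}+\|g-\tilde x\|_{L^1(\RR)}<\epsilon,
\]
and since the integral in the definition of $w_1(x,h)$ is exactly $\|\tau_u\tilde x-\tilde x\|_{L^1(\RR)}$ and the supremum is taken over $|u|\leq|h|$, taking $|h|\leq \delta$ gives $w_1(x,h)\leq\epsilon$.

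The only subtle point is the density step, which is the heart of the classical $L^1$-continuity of translation result; I would simply quote it rather than rebuild it (it follows from the density of simple functions combined with inner regularity of the Lebesgue measure, or from the density of continuous compactly supported functions in $L^1(\RR)$). Everything else is bookkeeping with the triangle inequality and the extension by zero.
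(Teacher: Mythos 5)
Your proof is correct. Note, though, that the paper itself does not prove this lemma at all: its ``proof'' is a one-line citation to the reference \cite{ahues1}, so there is no argument in the paper to compare against. What you supply is the standard classical argument that such a reference would contain: Heine--Cantor for $w_2$, and for $w_1$ the continuity of translation in $L^1(\RR)$ via density of continuous compactly supported functions, the isometry property of translation, and the triangle inequality. This is exactly the right route and is self-contained modulo the density statement, which is reasonable to quote. One small inaccuracy: the integral in the definition of $w_1(x,h)$ is taken over $[a,b]$, not over all of $\RR$, so it equals $\int_a^b |\widetilde{x}(v+u)-\widetilde{x}(v)|\,dv \leq \Vert \tau_u\widetilde{x}-\widetilde{x}\Vert_{L^1(\RR)}$ rather than being ``exactly'' that norm; since the inequality goes the right way, your conclusion is unaffected, but the word ``exactly'' should be replaced by ``bounded above by.''
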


\begin{proof}
See \cite{ahues1}.
\end{proof}

\section{Product integration in $L^1$}

Let $\pi_n$ be the projection defined with a uniform grid as follows:
\begin{eqnarray*}
\forall i=0,\dots,n, \qquad t_{n,i}&:=&a+ih_n,\\\\
h_n&:=&\frac{b-a}{n}.
\end{eqnarray*}
For $i=1,\dots,n$,
$$
\forall x\in L^1([a,b],\CC), \pi_n(x)(t):=\frac{1}{h_n}\integ{t_{n,i}}{t_{n,i-1}}{x(v)}{v}=c_{n,i}, \ t\in [t_{n,i-1},t_{n,i}].
$$
It is obvious that $\Vert \pi_n h \Vert \leq \Vert h \Vert$ and $\Vert \pi_n   \Vert=1$. We also have
$$
\pi_n\LL^p  I,
$$
where $\LL\limits^{p}$ denotes the  pointwise convergence and $I$ the identity operator. In fact, (see \cite{ahues1}),
\begin{equation}\label{pin}
\Vert \pi_n(x)-x\Vert \leq 2w_1(x,h_n)
\end{equation}
To approximate problem (\ref{eq1}), we define the operator
$$
K_n(x)(s):=\integ{a}{b}{H(s,t)[L(s,t)]_nN(\pi_n(x)(t))}{t},
$$
where, $\forall s\in [a,b],\forall i=1,\dots,n$:
\begin{align*}
[L(s,t)]_n&:=\frac{1}{h_n}\left( (t_{n,i}-t)L(s,t_{n,i-1})+(t-t_{n,i-1})L(s,t_{n,i})\right)
\end{align*}
for  $t\in[t_{n,i-1},t_{n,i}]$.\\
Consequently, the approximate operator $U_n$ will be defined by 
\begin{eqnarray}
U_n(x):=K_n(x)-y.
\end{eqnarray}

\noindent{\bf Notations:}

\bigskip

\noindent
$\Vert\cdot\Vert$  denotes the norm of the underlying vector space, whatever  it may be. As usual $K'$  denotes the first order  Fréchet-derivative of $K$, and $K''$ its second order Fréchet-derivative.

Let us define the following operator $A_0$: $\forall x\in \Omega_r(\varphi)$, $\forall s\in [a,b]$,
\begin{eqnarray*}
A_0(x) &:& s\mapsto\integ{a}{b}{|H(s,t)||N(x(t))|}{t},
\end{eqnarray*}
provided that the integral exists. 

We make the following assumptions on $L$, $H$ and $N$:

\begin{itemize}
\item [(P1)] $L\in C^0([a,b]^2,\CC)$ and 
$$
c_L:=\max_{s,t\in [a,b]}|L(s,t)|.
$$
\item[(P2)] There exists $r>0$ such that, $\Omega_r(\varphi)\subset \Omega$, and there exist $m_0>0$, $M_0>0$, $M_1>0$, $M_2>0$, $C_1>0$, $C_2>0$, $M>0$ and $C>0$  such that:
\item[(P2.1)]  $\forall x\in \Omega_r(\varphi), A_0(x) \in L^1$ and $\forall n \in \NN, A_0(\pi_n\varphi)\in L^1$ and
\begin{eqnarray*} 
\sup_{x\in \Omega_r(\varphi)}\|A_0(x)\|&\le&M_0,\\
\sup_{x\in \Omega_r(\varphi)}\|A_0(\pi_n(x))\|&\le&m_0.
\end{eqnarray*}
\item[(P2.2)] $K$  is twice Fr\'echet-differentiable and 
\begin{eqnarray*}
\sup_{x\in \Omega_r(\varphi)}\Vert K'(x)\Vert&\le&M_1,\\
\sup_{x\in \Omega_r(\varphi)}\Vert K''(x)\Vert&\le&M_2.
\end{eqnarray*}
\item[(P2.3)]  For $n$ large enough,
\begin{eqnarray*}
\sup_{n \in \NN}\Vert K_n'(\varphi)\Vert&\le&C_1,\\
\sup_{x\in \Omega_r(\varphi)}
\Vert K_n''(x)\Vert&\le&C_2.
\end{eqnarray*}
\item[(P2.4)]   
\begin{eqnarray*}
\sup_{x\in \Omega_r(\varphi)}\integ{a}{b}{|N(\pi_n(x)(t))|}{t} \leq M.
\end{eqnarray*}
\begin{eqnarray*}
\sup_{x\in \Omega_r(\varphi)}\integ{a}{b}{|N( x(t))|}{t} \leq C
\end{eqnarray*}
\item[(P2.5)] $w_H:\RR\to\RR$ given by 
$$
w_H(h):= \sup_{t\in[a,b]}\integ{a}{b}{|\widetilde{H}(s+h,t)-\widetilde{H}(s,t)|}{s},
$$
satisfies
$$
\lim\limits_{h\to 0}w_H(h)=0.
$$
\end{itemize}

Let us notice the the assumptions (P1) and (P2.5) are the hypothesis of the extension of the product integration method to $L^1$ in the linear case (see \cite{ahues1}).

\begin{prop}\label{U}
If the properties {\rm(P1)} and {\rm(P2)} are verified, then $U$ is defined from $\Omega_r(\varphi)$ into $L^1([a,b],\CC)$, and it is a continuous compact  operator.
\end{prop}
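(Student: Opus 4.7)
The plan is to verify three properties for $U = K - y$: (a) $U$ maps $\Omega_r(\varphi)$ into $L^1([a,b],\CC)$; (b) $U$ is continuous; (c) $U(\Omega_r(\varphi))$ is relatively compact in $L^1([a,b],\CC)$. Property (a) is immediate from (P1) and (P2.1): $|K(x)(s)| \le c_L A_0(x)(s)$, and $A_0(x) \in L^1$ with $\Vert A_0(x)\Vert \le M_0$, so $\Vert K(x)\Vert \le c_L M_0$ and $U(x) = K(x) - y \in L^1$. Property (b) follows from (P2.2): since $K$ is Fréchet differentiable on the convex ball $\Omega_r(\varphi)$ with $\Vert K'(x)\Vert \le M_1$, the mean value inequality makes $K$ (and hence $U$) $M_1$-Lipschitz.

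For (c) I would invoke the Kolmogorov--Riesz--Fréchet theorem (Theorem~\ref{KRF}) on the set $F = \{\widetilde{K(x)} : x \in \Omega_r(\varphi)\} \subset L^1(\RR,\CC)$, where $\widetilde{K(x)}$ denotes the extension of $K(x)$ by zero outside $[a,b]$. Boundedness of $F$ in $L^1(\RR)$ is already established. The remaining task is uniform translation continuity. Writing $\widetilde{H},\widetilde{L}$ for the extensions by zero in $s$, I would use the decomposition
\[
\widetilde{H}(s+h,t)\widetilde{L}(s+h,t) - \widetilde{H}(s,t)\widetilde{L}(s,t) = \widetilde{L}(s+h,t)[\widetilde{H}(s+h,t)-\widetilde{H}(s,t)] + \widetilde{H}(s,t)[\widetilde{L}(s+h,t)-\widetilde{L}(s,t)],
\]
multiply by $N(x(t))$, and integrate over $(s,t)\in\RR\times[a,b]$. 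Fubini applied to the first piece gives a uniform bound $c_L\,w_H(|h|)\int_a^b|N(x(t))|\,dt \le c_L C\, w_H(|h|)$ via (P1), (P2.4), and (P2.5).

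The second piece is where the main difficulty lies. I would split its inner $s$-integral into the interior region $\{s:\,s,s+h\in[a,b]\}$, on which $|\widetilde{L}(s+h,t)-\widetilde{L}(s,t)|\le w_2(L,|h|)$, and the two boundary strips of length $|h|$ near $s=a$ and $s=b$. The interior contributes at most $w_2(L,|h|)\,\Vert A_0(x)\Vert \le w_2(L,|h|)\,M_0$ after Fubini and (P2.1). The boundary strips are the delicate point: the crude estimate $|\widetilde{L}(s+h,t)-\widetilde{L}(s,t)|\le c_L$ leaves $c_L\int_{b-|h|}^b |H(s,t)|\,ds$ (plus a symmetric term near $a$), which does not obviously vanish as $h\to0$ from the $L^1$ bound on $A_0$ alone. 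The key observation is that on $s\in(b-|h|,b]$ (for $h>0$) one has $\widetilde{H}(s,t)=H(s,t)$ while $\widetilde{H}(s+h,t)=0$, so
\[
\int_{b-|h|}^b |H(s,t)|\,ds \le \int_\RR|\widetilde{H}(s+h,t)-\widetilde{H}(s,t)|\,ds \le w_H(|h|),
\]
uniformly in $t$; the strip near $a$ is handled symmetrically. Combined with (P2.4), the boundary contribution is bounded by $c_L C\,w_H(|h|)$. Altogether the translation modulus of $\widetilde{K(x)}$ is at most $2c_L C\,w_H(|h|) + M_0\,w_2(L,|h|)$, uniformly in $x\in\Omega_r(\varphi)$, which tends to $0$ as $h\to 0$. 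Theorem~\ref{KRF} then yields relative compactness of $\{K(x):x\in\Omega_r(\varphi)\}$ in $L^1([a,b],\CC)$, and so $U$ is completely continuous.
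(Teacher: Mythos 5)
Your proof is correct and follows the same overall strategy as the paper's: bound $U$ on the ball, extend by zero in $s$, and apply the Kolmogorov--Riesz--Fr\'echet theorem with the product-rule decomposition of $\widetilde{H}(s+h,t)\widetilde{L}(s+h,t)-\widetilde{H}(s,t)\widetilde{L}(s,t)$ into an $H$-increment piece (controlled by $w_H$) and an $L$-increment piece (controlled by $w_2$ and $\Vert A_0(x)\Vert$). Two points differ, both worth noting. First, for membership in $L^1$ you use the direct bound $\Vert K(x)\Vert\le c_L\Vert A_0(x)\Vert\le c_L M_0$ from (P1) and (P2.1), whereas the paper expands $K$ to second order around $\varphi$ and invokes (P2.2); both work, and yours is the more economical. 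Second, and more substantively, you truncate \emph{both} kernel factors and therefore must confront the boundary strips where exactly one of $s$, $s+h$ lies in $[a,b]$, on which $|\widetilde{L}(s+h,t)-\widetilde{L}(s,t)|$ is only of size $c_L$ rather than $w_2(L,|h|)$; your observation that there $|H(s,t)|=|\widetilde{H}(s+h,t)-\widetilde{H}(s,t)|$, so the strip contribution is absorbed into $c_L C\,w_H(|h|)$ via (P2.4) and (P2.5), is a clean and correct repair. The paper instead writes the second piece with $L(s+h,t)-L(s,t)$ un-truncated (only $H$ carries the zero extension), which implicitly extends $L$ beyond $[a,b]$ so that the uniform modulus $w_2(L,h)$ applies everywhere and no strip analysis is needed; your version avoids that implicit extension at the cost of the extra $c_L C\,w_H(|h|)$ term, and arguably makes explicit a step the paper leaves to the reader. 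The resulting uniform modulus $2c_L C\,w_H(|h|)+M_0\,w_2(L,|h|)$ is of the same nature as the paper's $c_L C\,w_H(h)+2M_0\,w_2(L,h)$, and the conclusion via Theorem~\ref{KRF}, together with continuity from (P2.2), is identical.
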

\begin{proof}
$\forall x\in \Omega_r(\varphi)$, from the second order  Taylor expansion with integral remainder we get
\begin{eqnarray*}
\Vert U(x)\Vert&\le&\Vert K(x)\Vert+\Vert y\Vert\le\Vert K(\varphi)\Vert +\Vert K'(\varphi)(x-\varphi)\Vert\\
&&+\frac{1}{2}\sup_{u\in \Omega_r(\varphi)}\Vert K''(u)\Vert \Vert x-\varphi\Vert^2 + \|y\|,
\end{eqnarray*}
so that
\begin{align}\label{majU}
\Vert U(x)\Vert \leq \Vert K(\varphi)\Vert +rM_1 +\frac{1}{2}r^2 M_2 +\|y\|.
\end{align}
This proves that $U$ is defined from $\Omega_r(\varphi)$ into $L^1([a,b],\CC)$. \\
Let $B$ be a subset of $\Omega_r(\varphi)$ and define  $W:=\widetilde{U}(B)$, 
where
\begin{eqnarray*}
\widetilde{U}(x)(s) 
&:=&\left\{
\begin{array}{ll}
U(x)(s)&\textrm{for $s\in [a,b]$},\\\\
0 &\textrm{for $s\notin [a,b]$}.
\end{array}\right.
\end{eqnarray*}
From (\ref{majU}), $W$ is bounded in $L^1(\RR,\CC)$.
Let us prove that 
$$ 
\lim_{h\to 0}\Vert\tau_h f-f\Vert=0\quad\mbox{ uniformly in } f\in W.
$$
\begin{eqnarray*}
\Vert \tau_h\widetilde{U}(x)-\widetilde{U}(x)\Vert  
&\leq& c_L\integ{a}{b}{\integ{a}{b}{|\widetilde{H}(s+h,t)-\widetilde{H}(s,t)\Vert N(x(t))|}{t}}{s}\\
&&+\integ{a}{b}{\integ{a}{b}{\big|L(s+h,t)-L(s,t)\big\Vert \widetilde{H}(s,t)\Vert N(x(t))|}{s}}{t}\\
&\leq& c_Lw_H(h)C+2w_2(L,h)\Vert A_0(x)\Vert\\
&\leq& c_Lw_H(h)C +2w_2(L,h)M_0.
\end{eqnarray*}
Hence 
$$
\lim_{h\to 0}\sup_{x\in \Omega_r(\varphi)}\Vert \tau_h\widetilde{U}(x)-\widetilde{U}(x)\Vert =0.
$$
By the Kolmogorov-Fréchet-Riesz theorem, $U(B)=W|_{[a,b]}$ has a compact closure, thus $U$ is compact. As $K$ is  continuous, $U$ is continuous.
\end{proof}

\begin{prop}\label{cp}
The sequence $(U_n)_{n\geq 1}$ satisfies $U_n\LL\limits^{p} U \mbox{ on } \Omega_r(\varphi).$  
\end{prop}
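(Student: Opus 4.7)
The plan is to exploit the fact that $y_n = y$, so that $U_n(x) - U(x) = K_n(x) - K(x)$, and to decompose this difference by inserting the intermediate term $H(s,t)\,[L(s,t)]_n\,N(x(t))$:
\begin{align*}
K_n(x)(s) - K(x)(s) &= \int_a^b H(s,t)\,[L(s,t)]_n\,\bigl(N(\pi_n(x)(t)) - N(x(t))\bigr)\,dt\\
&\quad + \int_a^b H(s,t)\,\bigl([L(s,t)]_n - L(s,t)\bigr)\,N(x(t))\,dt.
\end{align*}
Call these two pieces $T_1(s)$ and $T_2(s)$, and estimate each separately in $L^1([a,b],\CC)$.

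The piece $T_2$ is the easier one. For each fixed $s$, the quantity $[L(s,\cdot)]_n$ is the continuous piecewise-linear interpolant of $L(s,\cdot)$ on the uniform grid, so by (P1) one has the uniform pointwise bound $|[L(s,t)]_n - L(s,t)| \leq w_2(L, h_n)$. This yields $|T_2(s)| \leq w_2(L, h_n)\, A_0(x)(s)$, hence $\Vert T_2\Vert \leq w_2(L, h_n)\, M_0 \to 0$ thanks to (P2.1) and the lemma guaranteeing $w_2(L, h_n) \to 0$.

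The piece $T_1$ is more delicate. Using $|[L(s,t)]_n| \leq c_L$ (since it is a convex combination of two values of $L$), the task reduces to showing that
$$
\int_a^b \int_a^b |H(s,t)|\,|N(\pi_n(x)(t)) - N(x(t))|\,dt\,ds \longrightarrow 0.
$$
Two ingredients would be combined here. First, Lebesgue's differentiation theorem gives $\pi_n(x)(t) \to x(t)$ for almost every $t$ (every Lebesgue point of $x$ will do, since $\pi_n(x)(t)$ is an average of $x$ over an interval of length $h_n$ containing $t$); the continuity of $N$ (a consequence of Fréchet differentiability) then yields $N(\pi_n(x)(t)) \to N(x(t))$ almost everywhere. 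Second, property (P2.5) plays the same role as in the proof of Proposition \ref{U}, furnishing by Fubini the uniform control on $\int_a^b |H(s,t)|\,ds$ needed to reduce the double integral to an $L^1$-estimate in $t$, while (P2.4) supplies the uniform $L^1$-bounds on $N(\pi_n(x))$ and $N(x)$ needed to pass from the a.e.\ convergence to $L^1$ convergence via a Vitali / dominated convergence argument.

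The main obstacle is precisely this last step for $T_1$: since elements of $\Omega_r(\varphi)\subset L^1$ need not be essentially bounded, one has no pointwise control on $\pi_n(x)$ and cannot invoke a Lipschitz bound on $N$ directly. The passage from the $L^1$ convergence $\pi_n(x) \to x$ (which is (\ref{pin})) to the $L^1$ convergence of $N(\pi_n(x))$ to $N(x)$ must instead be obtained by combining the a.e.\ convergence just described with the equi-integrability furnished by the uniform bounds in (P2.4).
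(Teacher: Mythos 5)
There is a genuine gap in your treatment of the term $T_1$. You reduce it to showing $\int_a^b\int_a^b |H(s,t)|\,|N(\pi_n(x)(t))-N(x(t))|\,dt\,ds\to 0$ and propose to get this from almost-everywhere convergence of $N(\pi_n(x))$ plus ``equi-integrability furnished by the uniform bounds in (P2.4)''. But (P2.4) only gives a uniform bound on the $L^1$-norms $\int_a^b|N(\pi_n(x)(t))|\,dt$, and a uniform $L^1$-bound is \emph{not} uniform integrability: Vitali's theorem cannot be invoked from it, and without some growth or boundedness hypothesis on $N$ (none is assumed in (P1)--(P2)) the passage from a.e.\ convergence to $L^1$-convergence of $N(\pi_n(x))$ simply does not follow from the stated hypotheses. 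You correctly identify this as the main obstacle, but the resolution you sketch does not close it.

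The paper sidesteps this difficulty entirely by a different decomposition: it groups the interpolation error $[L]_n-L$ with $N(\pi_n(x))$ (bounded by $2w_2(L,h_n)\Vert A_0(\pi_n(x))\Vert\le 2w_2(L,h_n)m_0$ via (P2.1)), so that the remaining term is exactly $K(\pi_n(x))-K(x)$. It then uses the hypothesis (P2.2) that $K$ itself is twice Fr\'echet-differentiable with $\Vert K'\Vert\le M_1$, $\Vert K''\Vert\le M_2$ on $\Omega_r(\varphi)$, and Taylor's formula gives $\Vert K(\pi_n(x))-K(x)\Vert\le M_1\Vert\pi_n(x)-x\Vert+\tfrac12 M_2\Vert\pi_n(x)-x\Vert^2$. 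This converts the $L^1$-convergence $\Vert\pi_n(x)-x\Vert\to 0$ of (\ref{pin}) directly into convergence of the nonlinear composition, with no measure-theoretic argument about $N\circ\pi_n(x)$ needed; it also produces the quantitative estimate (\ref{U_n-U}), which is what feeds the error bound in Theorem \ref{theogen}. In short: the differentiability of the composed operator $K$ is an assumption you already have and should use, rather than trying to prove continuity of the Nemytskii operator $x\mapsto N\circ x$ on $L^1$ from scratch.
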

\begin{proof}
For all $x\in \Omega_r(\varphi)$,
\begin{eqnarray*}
\Vert U_n(x)-U(x)\Vert  
&\leq&\int^b_a\big|\int^b_a\big([L(s,t)]_n-L(s,t)\big)H(s,t)N(\pi_n(x)(t))\big)dt\big|ds \\
&&+\int^b_a\big|\int^b_a H(s,t)L(s,t)\big(N(\pi_n(x)(t))-N(x(t))\big)dt\big|ds\\
&\leq& 2w_2(L,h_n)\Vert A_0(\pi_n(x))\Vert +\Vert K(\pi_n(x))-K(x)\Vert \\
&\leq& 2w_2(L,h_n)m_0+ \Vert K'(x)\Vert \Vert \pi_n(x)-x \Vert\\
&&+\frac{1}{2}\Vert \pi_n(x)-x\Vert^2 \sup_{v\in \Omega_r(\varphi)}\Vert K''(v)\Vert \\
&\leq& 2w_2(L,h_n)m_0+M_1 \Vert \pi_n(x)-x \Vert +\frac{1}{2} M_2 \Vert \pi_n(x)-x \Vert^2.
\end{eqnarray*}
Hence 
\begin{eqnarray}\label{U_n-U}
\Vert U_n(x)-U(x)\Vert  
 \leq 2w_2(L,h_n)m_0+M_1 \Vert \pi_n(x)-x \Vert +\frac{1}{2} M_2 \Vert \pi_n(x)-x \Vert^2.
\end{eqnarray}
As $\pi_n\LL\limits^{p} I$, $(U_n)_{n\geq 1}$ is pointwise convergent to $U$.
\end{proof}

\begin{prop}\label{thorKn}
If the properties {\rm(P1)} and {\rm(P2)} are verified, then $U_n$ is a continuous compact operator from $\Omega_r(\varphi)$ into $L^1([a,b],\CC)$, and $(U_n)_{n\geq 1}$ is a collectively compact sequence.
\end{prop}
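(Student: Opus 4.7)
The plan is to mirror the proof of Proposition \ref{U} as closely as possible, the main difference being that every estimate must now be uniform in $n$, so that we can apply the Kolmogorov--Riesz--Fréchet theorem simultaneously to the whole family $\{U_n(B):n\geq 1\}$ rather than to a single image $U(B)$. This will give collective compactness, of which the compactness of each individual $U_n$ is a corollary.

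First, I would check that each $U_n$ maps $\Omega_r(\varphi)$ into $L^1([a,b],\CC)$ and is continuous there. The bound $\Vert K_n(\varphi)\Vert\le c_L\Vert A_0(\pi_n\varphi)\Vert\le c_Lm_0$ follows from (P1) and (P2.1), and a second order Taylor expansion of $K_n$ around $\varphi$, together with (P2.3), yields
$$
\Vert U_n(x)\Vert\le c_Lm_0+C_1r+\tfrac12 C_2r^2+\Vert y\Vert,
$$
uniformly in $n$ and $x\in\Omega_r(\varphi)$. Continuity of $U_n$ follows from the local Lipschitz property of $K_n$ implied by the uniform bound on $K_n''$ combined with $\Vert K_n'(\varphi)\Vert\le C_1$.

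The core step is the uniform equicontinuity of translations. For bounded $B\subset\Omega_r(\varphi)$, setting $\widetilde{U_n}$ to be the zero-extension of $U_n$ and $W:=\bigcup_{n\ge1}\widetilde{U_n}(B)$, I would split
$$
[L(s+h,t)]_n\widetilde{H}(s+h,t)-[L(s,t)]_n\widetilde{H}(s,t)
$$
into the two telescoping pieces used in the proof of Proposition \ref{U}. The key new observation is that, since $[L(s,t)]_n$ is a convex combination in $t$ of values $L(s,t_{n,i-1}),L(s,t_{n,i})$, one has the pointwise bounds $|[L(s,t)]_n|\le c_L$ and $|[L(s+h,t)]_n-[L(s,t)]_n|\le w_2(L,|h|)$ which are independent of $n$. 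Combining these with (P2.4) and (P2.1) gives
$$
\Vert\tau_h\widetilde{U_n}(x)-\widetilde{U_n}(x)\Vert\le c_Lw_H(h)M+w_2(L,|h|)m_0,
$$
uniformly in $x\in B$ and $n\ge1$.

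By (P2.5) and the lemma on $w_2$, both terms tend to $0$ as $h\to0$. Together with the uniform $L^1(\RR,\CC)$-boundedness of $W$ established in the first step, the Kolmogorov--Riesz--Fréchet theorem (Theorem \ref{KRF}) then asserts that $W|_{[a,b]}=\bigcup_{n\ge1}U_n(B)$ has compact closure in $L^1([a,b],\CC)$. This is precisely collective compactness of $(U_n)_{n\ge1}$, and specializing to a single $n$ yields compactness of each $U_n$. I expect the main obstacle to be the uniform control of $[L(s+h,t)]_n-[L(s,t)]_n$ by $w_2(L,|h|)$; once one notices that the interpolation coefficients are nonnegative and sum to $1$, the argument collapses to the scalar modulus of continuity of $L$, and the rest is bookkeeping.
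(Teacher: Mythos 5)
Your proposal is correct and follows essentially the same route as the paper: zero-extend $U_n$, bound $\Vert \widetilde{U}_n(x)\Vert$ uniformly in $n$ via a Taylor expansion at $\varphi$ using (P2.1) and (P2.3), bound the translates by $c_L M w_H(h)+O(w_2(L,h))m_0$ using the convex-combination structure of $[L(\cdot,\cdot)]_n$, and apply Kolmogorov--Riesz--Fr\'echet to $\bigcup_{n\geq 1}\widetilde{U}_n(B)$. The only (immaterial) discrepancy is a factor of $2$ in the $w_2(L,h)m_0$ term, which the paper keeps and you drop; either constant yields the required uniform vanishing as $h\to 0$.
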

\begin{proof}
$U_n$ is continuous on $\Omega_r(\varphi)$ because $K_n$ is Fr\'echet-differentiable. 

Let us prove that $(U_n)_{n\geq 1}$ is collectively compact. This is equivalent to prove that 
$$
F:=\bigcup\limits_{n\geq 1}U_n(B)
$$ 
is relatively compact for all bounded  subset $B$ of $\Omega_r(\varphi)$.We define the subset $E$ by 
$$
E:=\bigcup\limits_{n\geq 1}\widetilde{U}_n(B),
$$
where
\begin{eqnarray*}
\widetilde{U}_n(x)(s)&:= \left\{
\begin{array}{ll}
U_n(x)(s) &\textrm{for $s\in [a,b]$},\\\\
0 &\textrm{for $s\notin [a,b]$}.
\end{array}\right.
\end{eqnarray*}
Then
\begin{eqnarray*}
\Vert \widetilde{U}_n(x)\Vert &\leq& \Vert K_n(x)-K_n(\varphi)\Vert + \Vert  K_n(\varphi)\Vert +\Vert y\Vert \\
&\leq& \Vert K_n'(\varphi)(x-\varphi)+
\integ{0}{1} {(1-t)K_n''(\varphi+t(x-\varphi))(x-\varphi,x-\varphi)}{t} \Vert + \Vert  K_n(\varphi)\Vert +\Vert y\Vert \\
&\leq&
\Vert K_n'(\varphi)(x-\varphi)\Vert+
\frac{1}{2} \sup_{v\in \Omega_r(\varphi)}\Vert K_n''(v)\Vert \Vert x-\varphi  \Vert^2 + \Vert  K_n(\varphi)\Vert +\Vert y\Vert \\
&\leq&
r \Vert K_n'(\varphi)\Vert+
\frac{r^2}{2} C_2  + \Vert  K_n(\varphi)\Vert +\Vert y\Vert \\
&\leq&
r C_1+
\frac{r^2}{2} C_2  + c_L m_0 +\Vert y\Vert,
\end{eqnarray*}
hence $E$ is uniformly bounded.

For all $x\in \Omega_r(\varphi)$,
\begin{eqnarray*}
\Vert \tau_h\widetilde{U}_n(x)-\widetilde{U}_n(x)\Vert 
&=&\int^b_a\big|\int^b_a\big[\widetilde{H}(s+h,t)[\widetilde{L}(s+h,t)]_n\\\\
&&-\widetilde{H}(s,t)[\widetilde{L}(s,t)]_n\big]N(\pi_n(x)(t))dt\big|ds\\\\
&\leq& c_L\integ{a}{b}{\integ{a}{b}{|\widetilde{H}(s+h,t)-\widetilde{H}(s,t)\Vert N(\pi_n(x)(t))|}{t}}{s}\\\\
&&+\integ{a}{b}{\integ{a}{b}{\big|[\widetilde{L}(s+h,t)]_n-[\widetilde{L}(s,t)]_n\big\Vert \widetilde{H}(s,t)\Vert N(\pi_n(x)(t))|}{t}}{s}\\\\
&\leq& c_LMw_H(h)+2w_2(L,h)m_0.
\end{eqnarray*}
Thus, by the Kolmogorov-Fréchet-Riesz theorem, $F:=E\big|_{[a,b]}$ has a compact closure, and $(U_n)_{n\geq1}$ is collectively compact.
\end{proof}

\begin{theorem}\label{theogen}
Assume that $1$ is not an eigenvalue of $U'(\varphi)$, and that {\rm(P1)} and {\rm(P2)} are verified. Then $\varphi$ is an isolated fixed point of $U$. Moreover there are $\epsilon\in]0,r[$ and $n_\epsilon>0$ such that, for every $n\geq n_\epsilon$, $U_n$ has a unique fixed point $\varphi_n$ in $\Omega_\epsilon(\varphi)$. Also, there is a constant $\gamma>0$ such that, for $n\geq n_\epsilon$,
\begin{equation}
\Vert \varphi-\varphi_n\Vert \leq \gamma(2w_2(L,h_n)m_0+2M_1 w_1(\varphi,h_n) +2M_2w_1^2(\varphi,h_n))
\end{equation}
\end{theorem}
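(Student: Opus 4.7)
The plan is to invoke Theorem \ref{atkinson} after verifying its four hypotheses for the pair $(U,U_n)$, and then to unpack the abstract bound $\Vert\varphi-\varphi_n\Vert\leq \gamma\Vert U(\varphi)-U_n(\varphi)\Vert$ using the explicit estimate already established in Proposition \ref{cp}.

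First I would match the hypotheses one by one. (H1), the complete continuity of $U$ and of every $U_n$, is exactly what Propositions \ref{U} and \ref{thorKn} deliver, since a continuous compact map on a Banach space is completely continuous. (H2), the collective compactness of $(U_n)_{n\geq 1}$, is the second assertion of Proposition \ref{thorKn}. (H3), pointwise convergence $U_n\to U$ on $\Omega_r(\varphi)$, is Proposition \ref{cp}. For (H4), I observe that $U''=K''$ and $U_n''=K_n''$, so (P2.2) and (P2.3) directly supply the uniform bound $M(\varphi,r):=\max\{M_2,C_2\}$; it is enough to take $r_\varphi\leq r$ so that the same ball serves $U$ and all $U_n$ with $n$ large enough.

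Once (H1)--(H4) are established and the spectral assumption $1\notin\sigma(U'(\varphi))$ is imposed, Theorem \ref{atkinson} returns a threshold $n_\epsilon$ and some $\epsilon\in]0,r_\varphi[$ such that each $U_n$ admits a unique fixed point $\varphi_n\in \Omega_\epsilon(\varphi)$ and
$$
\Vert\varphi-\varphi_n\Vert\leq \gamma\Vert U(\varphi)-U_n(\varphi)\Vert \qquad \text{for all } n\geq n_\epsilon.
$$

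The remaining step is to specialise inequality (\ref{U_n-U}) to $x=\varphi$ and to replace $\Vert\pi_n(\varphi)-\varphi\Vert$ via (\ref{pin}), which yields $\Vert\pi_n(\varphi)-\varphi\Vert\leq 2w_1(\varphi,h_n)$. Substituting produces
$$
\Vert U(\varphi)-U_n(\varphi)\Vert\leq 2w_2(L,h_n)m_0 + 2M_1 w_1(\varphi,h_n) + 2M_2 w_1^2(\varphi,h_n),
$$
and the announced bound follows after multiplying by $\gamma$. There is no serious obstacle: the analytic work has been front-loaded into Propositions \ref{U}, \ref{cp} and \ref{thorKn} together with the uniform bounds of (P2). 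The only delicate point is bookkeeping on the neighbourhoods, namely ensuring that the ball on which the second-derivative bound in (H4) is valid coincides with (or is contained in) the ball on which (P2.2)--(P2.3) hold, which is arranged by choosing $r_\varphi\leq r$.
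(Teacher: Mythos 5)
Your proposal is correct and follows exactly the route of the paper's own proof: verify (H1)--(H4) via Propositions \ref{U}, \ref{cp} and \ref{thorKn}, apply Theorem \ref{atkinson}, and then combine (\ref{estatkin}) with (\ref{U_n-U}) at $x=\varphi$ and (\ref{pin}) to get the explicit bound (the factor $\frac{1}{2}M_2(2w_1)^2=2M_2w_1^2$ checks out). Your additional bookkeeping on the radii $r_\varphi\leq r$ is a sensible clarification the paper leaves implicit.
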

\begin{proof}
By Proposition \ref{U}, Proposition \ref{cp}, and Proposition \ref{thorKn}, conditions (H1) to (H4) in Theorem \ref{atkinson} are satisfied. The estimation is obtained by (\ref{estatkin}), (\ref{U_n-U}) and (\ref{pin}).\end{proof}

\section{Implementation and numerical evidence}
  
The approximate solution  is the exact solution of the equation
\begin{equation}\label{kn}
K_n(\varphi_n)-y=\varphi_n,
\end{equation}
where
\begin{eqnarray*}
K_n(\varphi_n)(s) 
&:=&\sum_{j=1}^{n}w_{n,j}(s)N(c_{n,j}),\\
w_{n,j}(s)&:=&\int_{t_{n,j-1}}^{t_{n,j}}H(s,t)[L(s,t)]_ndt,\\
c_{n,j}&:=&\frac{1}{h_n}\int_{t_{n,j-1}}^{t_{n,j}}\varphi_n(s)ds. 
\end{eqnarray*}

For $ i=1,\dots,n$, integrating (\ref{kn}) over $[t_{n,i-1},t_{n,i}]$ and dividing by $h_n$, we obtain the following nonlinear system
$$
\sum_{j=1}^{n}\frac{1}{h_n}\int_{t_{n,i-1}}^{t_{n,i}}w_{n,j}(s)dsN(c_{n,j})-\frac{1}{h_n}\int_{t_{n,i-1}}^{t_{n,i}}\varphi_n(s)ds=\frac{1}{h_n}\int_{t_{n,i-1}}^{t_{n,i}}y(s)ds
$$
for $i=1,\dots,n$.

Set
\begin{eqnarray*}
{\sf Y}_n(i)&:=&\displaystyle\frac{1}{h_n}\int_{t_{n,i-1}}^{t_{n,i}}y(s)ds,\\
{\sf A}_n(i,j)&:=&=\displaystyle\frac{1}{h_n}\int_{t_{n,i-1}}^{t_{n,i}}w_{n,j}(s)ds,\\
{\sf C}_n&:=&\left[\begin{array}{c}c_{n,1}\\
\vdots\\
c_{n,n}
\end{array}\right].
\end{eqnarray*}

We can rewite the nonlinear system in the matrix form 
\begin{eqnarray}\label{nonlinsys}
{\sf A}_nN({\sf C}_n)-{\sf C}_n={\sf Y}_n,
\end{eqnarray}
where
$$
N({\sf C}_n):=\left[\begin{array}{c}N(c_{n,1})\\
\vdots\\
N(c_{n,n})
\end{array}\right].
$$

Let ${\sf F}_n:\CC^{n{\times}1}\to\CC^{n{\times}1}$ be the operator defined by
$$
{\sf F}_n({\sf X}):={\sf A}_nN({\sf X})-{\sf X}-{\sf Y}_n,\quad{\sf X}\in\CC^{n{\times}1}.
$$
Newton's method will be applied to solve numerically the nonlinear problem 
$$
{\sf F}_n({\sf C}_n)={\sf 0}.
$$
Tables 1, 2 and 3 show the convergence of Newton's sequence for $n=10$ and $n=100$. The asumptions of Theorem \ref{theogen} are satisfied since $N$, $N'$ and $N''$ are bounded.

\bigskip

\noindent{\bf Example 1}  

For all $s,t\in [0,1]$, and $u\in \RR$,
\begin{eqnarray*}
L(s,t)&:=&1,\\
H(s,t)&:=&-\log(|s-t|),\\
N(u)&:=&\sin(\pi u)\mbox{ or }\sin(2\pi u).
\end{eqnarray*}
We chose 
$$
\varphi(s):=1,\quad s\in [0,1],
$$ 
to be the exact solution, so that 
$$
y(s):=-1,\quad s\in [a,b].
$$
\begin{table}
\begin{center}
\begin{tabular}{c|c|c}
$k$ & $\dfrac{\Vert C_{10}^{(k)}-C_{10}\Vert}{\Vert C_{10}\Vert}$ & $\dfrac{\Vert C_{100}^{(k)}-C_{100}\Vert}{\Vert C_{100}\Vert}$ \\ 
& &   \\
\hline 
& &   \\
1 &    3.5e-01    &  3.5e-01 \\ 

2 &  1.9e-01  & 1.9e-01 \\ 

3 &  2.3e-02 & 2.3e-02 \\ 

4 & 4.9e-05    & 5.1e-05 \\ 
 
5 & 8.2e-13    & 9.4e-13 \\ 
 
6 & 2.9e-16 & 1.3e-15  \\ 
\end{tabular}
\end{center}
\caption{Relative errors for $N(u)=\sin(\pi u)$ in Example 1}
\end{table}
\begin{table}
\begin{center}
\begin{tabular}{c|c|c}
$k$ & $\dfrac{\Vert C_{10}^{(k)}-C_{10}\Vert}{\Vert C_{10}\Vert}$ & $\dfrac{\Vert C_{100}^{(k)}-C_{100}\Vert}{\Vert C_{100}\Vert}$ \\ 
& &   \\
\hline 
& &   \\
10 &    1.5e-01    &  1.4e-02 \\ 

11 &  1.5e-01  & 6.9e-03 \\ 

12 &  1.4e-01 & 2.3e-03 \\ 
 
13 & 1.5e-01    & 1.5e-04 \\ 
 
14 & 1.5e-01    & 4.2e-08 \\ 
 
15 & 1.5e-01 & 1.6e-13 \\ 

16 & 1.5e-01 & 7.3e-14
\end{tabular}
\end{center}
\caption{Relative errors for $N(u)=\sin(2\pi u)$ in Example 1}
\end{table}

\bigskip

\noindent{\bf Example 2}

For all $s,t\in [0,1]$, and $u\in \RR$,
\begin{eqnarray*}
L(s,t)&:=&1,\\
H(s,t)&:=&-\log(|s-t|),\\
N(u)&:=&\sin(\pi u),\\
\varphi(s)&:=&\left\{
\begin{array}{ll}
1 &\textrm{for $s\in [0,0.5]$},\\
2 &\textrm{for $s\in [0.5,1]$},
\end{array}\right.\\
y(s)&:=&\left\{
\begin{array}{ll}
-1 &\textrm{for $s\in [0,0.5]$},\\
-2 &\textrm{for $s\in [0.5,1]$}.
\end{array}\right.
\end{eqnarray*}
\begin{table}
\begin{center}
\begin{tabular}{c|c|c}
$k$ & $\dfrac{\Vert C_{10}^{(k)}-C_{10}\Vert}{\Vert C_{10}\Vert}$ & $\dfrac{\Vert C_{100}^{(k)}-C_{100}\Vert}{\Vert C_{100}\Vert}$ \\
 & &   \\
\hline 
& &   \\
1 &   5.7e-02     & 5.6e-02  \\                       
2 &  3.7e-02  &   2.1e-02\\ 

3 &  1.9e-02 & 1.6e-03 \\ 
 
4 & 1.6e-03 &  5.5e-07 \\ 
 
5 & 6.8e-07    & 1.9e-15 
\end{tabular} 
\end{center}
\caption{Relative errors of the Newton iterates in Example 2}
\end{table}

\bigskip

The accuracy of the approximation is limited by $n$ (see Table 2 for $n=10$), especially when $N(u):=\sin(2\pi u)$. In order to overcome this difficulty, we are working on an approach which consists in linearizing the nonlinear equation by a Newton-type method in infinte dimension, and then  applying the product integration method to the linear equations issued from the Newton's method. We expect that the accuracy will not be $n$-sensitive.  

\begin{center}
{\bf Acknowledgements}
\end{center}
The first and the third authors were partially supported by an Indo-French Centre for Applied Mathematics (IFCAM) project.

\end{document}